\author{Alec Edgington \thanks{\tt alec@obtext.com}}
\title{The autoconjugacy of a generalized Collatz map}
\date{June 2012}
\theoremstyle{plain} \newtheorem{conj}{Conjecture}
\theoremstyle{plain} 
\theoremstyle{plain} \newtheorem{thm}{Theorem}
\newcommand{\ee}[1]{\; \textrm{#1}}
\newcommand{\zz}{\mathbb{Z}_2}
\newcommand{\qq}{\mathbb{Q}_2}
\begin{document}

\maketitle

\begin{abstract}
Many of the 2-adic properties of the $3x+1$ map generalize to the analogous $mx+r$ map, where $m$ and $r$ are odd integers. We introduce the corresponding autoconjugacy map, prove some simple properties of it and make some further conjectures in the general setting, including weak versions of the periodicity and divergent trajectories conjectures.
\end{abstract}

\section{Introduction}

The Collatz (or $3x+1$, or Syracuse) problem concerns the behaviour under iteration of the map $T$ defined on the integers by:
\[
T(x) = \Bigg\{ \begin{array}{ll}
x/2 & x \equiv 0 \pmod 2 \\
(3x+1)/2 & x \equiv 1 \pmod 2
\end{array} \ee{.}
\]

This map is easily extended to the 2-adic integers $\zz$. (The definition is the same, parity being determined by the first coefficient in the 2-adic expansion.) This proves to be an interesting context in which to study the problem.

Lagarias\cite{laga} introduced a useful encoding of the behaviour of $T$ under iteration by means of the function $Q_\infty : \zz \to \zz$ defined by $Q_\infty (x) = \sum_{k \in \mathbb{N}} t_k 2^k$ where $t_k \in \{0,1\}$ and $t_k \equiv T^{(k)}(x) \pmod 2$. Bernstein\cite{bern} showed that the inverse function $\Phi = Q_\infty^{-1}$ is given by the formula
\[
\Phi (2^{d_0} + 2^{d_1} + 2^{d_2} + \ldots) = {} - 3^{-1} 2^{d_0} - 3^{-2} 2^{d_1} - 3^{-3} 2^{d_2} - \ldots
\]
where $0 \leq d_0 < d_1 < d_2 < \ldots$.

The functions $\Phi$ and $Q_\infty$ are continuous, measure-preserving bijections on $\zz$.

If we define the \emph{shift map} $\sigma : \zz \to \zz$ by
\[
\sigma (x) = \Bigg\{ \begin{array}{ll}
x/2 & x \equiv 0 \pmod 2 \\
(x-1)/2 & x \equiv 1 \pmod 2
\end{array} \ee{,}
\]
then we have $Q_\infty T = \sigma Q_\infty$ and $T \Phi = \Phi \sigma$. The automorphism group of $\sigma$ is just $\{1, V\}$ where $V(x) = -1 - x$. Monks and Yazinski\cite{moya} consider the function
\[
\Omega = \Phi V Q_\infty : \zz \to \zz \ee{.}
\]
This is an autoconjugacy of $T$: that is, $\Omega T = T \Omega$ and $\Omega^2 = 1$. It is the only nontrivial autoconjugacy of $T$.

Write $\qq$ for the set of rational numbers with odd denominator (when written in lowest terms). This is just the set of 2-adic integers whose 2-adic expansion is eventually periodic.
\begin{description}
\item[Periodicity conjecture] These three statements are equivalent:
\begin{itemize}
\item $Q_\infty (\qq) \subseteq \qq$;
\item $\Omega (\qq) \subseteq \qq$;
\item The $T$-orbit of every rational 2-adic integer is eventually periodic.
\end{itemize}
\item[Divergent trajectories conjecture] These three statements are equivalent:
\begin{itemize}
\item $Q_\infty (\mathbb{Z}^+) \subseteq \qq$;
\item $\Omega(\mathbb{Z}^+) \subseteq \qq$;
\item The $T$-orbit of every positive integer is eventually periodic.
\end{itemize}
\end{description}
Thus the map $\Omega$ is of some interest in relation to the Collatz problem and its variants.

\section{A generalized Collatz map}

The map $T$ is a special case of the more general map
\begin{equation}\label{Tdef}
T_{m,r}(x) = \Bigg\{ \begin{array}{ll}
x/2 & x \equiv 0 \pmod 2 \\
(mx+r)/2 & x \equiv 1 \pmod 2
\end{array} \ee{,}
\end{equation}
in which $m$ and $r$ are odd integers. Similarly to the definition of $Q_\infty$, one may define the corresponding map
\begin{equation}\label{Qdef}
Q_{m,r}(x) = \sum_{k \in \mathbb{N}} t_k 2^k \quad \textrm{where} \; t_k \in \{0, 1\}, t_k \equiv T_{m,r}^{(k)} \pmod 2 \ee{.}
\end{equation}

We clearly have $Q_{m,r} T = \sigma Q_{m,r}$. Note that $T_{3,1} = T$, $Q_{3,1} = Q_\infty$, $T_{1,-1} = \sigma$ and $Q_{1,-1} = 1_{\zz}$. By 'long division', one can show that if $r$ is any odd number then
\[
Q_{1,r}(x) = -x/r \ee{.}
\]

The methods described by Lagarias\cite{laga} suffice to show that $Q_{m,r} : \zz \to \zz$ is continuous, one-to-one and onto, and it preserves the 2-adic norm. Write $\Phi_{m,r} = Q_{m,r}^{-1}$.

The function $\Phi_{m,r}$ satisfies a pair of identities that 'reverse' the definition~\ref{Tdef}:
\begin{equation}\label{phi}
\Phi_{m,r}(x) = \Bigg\{ \begin{array}{l}
\Phi_{m,r}(2x)/2 \\
(m\Phi_{m,r}(1+2x) + r)/2
\end{array} \ee{.}
\end{equation}
The second of these identities may be derived as follows:
\begin{eqnarray*}
\Phi_{m,r}(x) & = & \Phi_{m,r} \sigma (1+2x) \\
&=& \Phi_{m,r} \sigma Q_{m,r} \Phi_{m,r} (1+2x) \\
&=& \Phi_{m,r} Q_{m,r} T_{m,r} \Phi_{m,r} (1+2x) \\
&=& T_{m,r} \Phi_{m,r} (1+2x) \\
&=& (m\Phi_{m,r}(1+2x) + r)/2
\end{eqnarray*}
since $\Phi_{m,r}(1+2x) \equiv 1 \pmod 2$.

This tells us that:
\[
\Phi_{m,r}(1 + 2x) = -\frac{r}{m} + \frac{2}{m} \Phi_{m,r}(x) \ee{,}
\]
which allows us to generalize Bernstein's formula for $\Phi_{3,1}$: indeed,
\begin{equation}\label{phi_expansion}
\Phi_{m,r} (2^{d_0} + 2^{d_1} + 2^{d_2} + \ldots) = -r (m^{-1} 2^{d_0} + m^{-2} 2^{d_1} + m^{-3} 2^{d_2} + \ldots ) \ee{.}
\end{equation}

A notable corollary is that
\[
\Phi_{m,r}(x) = r \Phi_{m,1}(x) \ee{.}
\]
So in some sense, the $mx+r$ problem 'looks like' the $mx+1$ problem.

We define the generalized autoconjugacy by
\[
\Omega_{m,r} = \Phi_{m,r} V Q_{m,r} \ee{.}
\]

Just like the $\Omega = \Omega_{3,1}$ defined above, this satisfies $\Omega_{m,r} T_{m,r} = T_{m,r} \Omega_{m,r}$ and $\Omega_{m,r}^2 = 1$.

For the case $m=1$, we have
\[
\Omega_{1,r}(x) = r - x \ee{.}
\]

\section{Properties of $\Omega_{m,r}$}

The map $\Omega_{m,r}$ is a bijection between the sets of odd and even 2-adic integers. The following corollary of equation~\ref{phi} effectively reduces the study of $\Omega_{m,r}$ to its behaviour on the odd integers.

\begin{thm}\label{omega_formula}
For all $x \in \zz$ and all $n \in \mathbb{N}$,
\[
\frac{r}{m-2} + \Omega_{m,r}(2^n x) = \Big(\frac{2}{m}\Big)^n \Big(\frac{r}{m-2} + \Omega_{m,r}(x)\Big) \ee{.}
\]
\end{thm}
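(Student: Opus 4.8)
The plan is to reduce the statement to the one-step recursion that $\Phi_{m,r}$ satisfies on odd arguments. First I would record the ``even'' counterpart of that recursion: applying $Q_{m,r}$ to the first line of equation~\ref{phi} (equivalently, using $Q_{m,r} T_{m,r} = \sigma Q_{m,r}$ on an even input) gives $Q_{m,r}(2z) = 2 Q_{m,r}(z)$ for every $z \in \zz$, and hence, by an immediate induction, $Q_{m,r}(2^n z) = 2^n Q_{m,r}(z)$ for all $n \in \mathbb{N}$.

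Next I would set $w = Q_{m,r}(x)$ and define $a_n = \Phi_{m,r}(-1 - 2^n w)$. Since $V(t) = -1 - t$, the identity just obtained yields $a_n = \Phi_{m,r}\big(V(2^n w)\big) = \Phi_{m,r} V Q_{m,r}(2^n x) = \Omega_{m,r}(2^n x)$, and in particular $a_0 = \Omega_{m,r}(x)$. Then, for $n \geq 1$, I would rewrite $-1 - 2^n w = 1 + 2(-1 - 2^{n-1} w)$ and apply the identity $\Phi_{m,r}(1 + 2y) = -\tfrac{r}{m} + \tfrac{2}{m}\Phi_{m,r}(y)$ (which holds for \emph{all} $y \in \zz$) with $y = -1 - 2^{n-1} w$. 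This gives the linear recurrence $a_n = -\tfrac{r}{m} + \tfrac{2}{m} a_{n-1}$.

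It then remains to solve this recurrence. Its unique fixed point is $c = -\tfrac{r}{m-2}$ — this makes sense because $m$ is odd, so $m-2$ is a unit in $\zz$ — and subtracting it from the recurrence gives $a_n - c = \tfrac{2}{m}(a_{n-1} - c)$, so that $a_n - c = \big(\tfrac{2}{m}\big)^n (a_0 - c)$. Substituting $c = -\tfrac{r}{m-2}$, $a_0 = \Omega_{m,r}(x)$ and $a_n = \Omega_{m,r}(2^n x)$ is exactly the assertion of the theorem; the case $n = 0$ holds trivially.

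I do not anticipate a real obstacle: the closest thing to a subtlety is (i) noting that $Q_{m,r}(2z) = 2 Q_{m,r}(z)$ holds for every $z$, not merely for even arguments, and (ii) noting that the odd-argument identity for $\Phi_{m,r}$ carries no parity hypothesis on $y$, so that the substitution $y = -1 - 2^{n-1} w$ is legitimate. Beyond those two points, the argument is a routine computation with a geometric recurrence.
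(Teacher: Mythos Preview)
Your argument is correct and is essentially the paper's own proof: both use $Q_{m,r}(2x)=2Q_{m,r}(x)$, rewrite $-1-2w=1+2(-1-w)$, and apply the identity $\Phi_{m,r}(1+2y)=-\tfrac{r}{m}+\tfrac{2}{m}\Phi_{m,r}(y)$ to obtain the one-step relation, then iterate. The only cosmetic difference is that the paper carries the constant $\tfrac{r}{m-2}$ through the $n=1$ computation from the outset, whereas you first derive the recurrence $a_n=-\tfrac{r}{m}+\tfrac{2}{m}a_{n-1}$ and then identify $-\tfrac{r}{m-2}$ as its fixed point.
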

\begin{proof}
We have:
\begin{eqnarray*}
\frac{r}{m-2} + \Omega_{m,r}(2x) &=& \frac{r}{m-2} + \Phi_{m,r}(-1 - Q_{m,r}(2x)) \\
&=& \frac{r}{m-2} + \Phi_{m,r}(-1 - 2 Q_{m,r}(x)) \\
&=& \frac{r}{m-2} + \Phi_{m,r}(1 + 2(-1 - Q_{m,r}(x))) \\
&=& \frac{r}{m-2} - \frac{r}{m} + \frac{2}{m} \Phi_{m,r}(-1 - Q_{m,r}(x)) \\
&=& \frac{2}{m} \Big(\frac{r}{m-2} + \Omega_{m,r}(x)\Big) \ee{.}
\end{eqnarray*}
The proposition follows.
\end{proof}

It is presumably \emph{not} the case in general that $\Omega_{m,r}(x) \in \qq \Rightarrow Q_{m,r}(x) \in \qq$: for a likely counterexample, take $x = \Omega_{5,1}(7)$. However, one can make the following conjecture.
\begin{conj}\label{rational_pairs}
If $\{x, \Omega_{m,r}(x) \} \subseteq \qq$, then $Q_{m,r}(x) \in \qq$.
\end{conj}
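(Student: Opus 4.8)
The plan is to normalise $r$, re-express the two rationality hypotheses as assertions about two $T_{m,r}$-orbits, and then isolate the one step that I expect to be genuinely hard. First, normalise $r$. Since $\Phi_{m,r} = r\,\Phi_{m,1}$, one checks directly that $Q_{m,r}(rz) = Q_{m,1}(z)$ and $\Omega_{m,r}(rz) = r\,\Omega_{m,1}(z)$ for every $z\in\zz$; writing $x = ru$ (legitimate, as $r$ is a $2$-adic unit) and using that multiplication by the odd integer $r$, or by its inverse, preserves $\qq$, the hypothesis $\{x,\Omega_{m,r}(x)\}\subseteq\qq$ becomes $\{u,\Omega_{m,1}(u)\}\subseteq\qq$, while $Q_{m,r}(x) = Q_{m,1}(u)$. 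So it suffices to treat $r=1$, and I abbreviate $\Phi=\Phi_{m,1}$, $Q=Q_{m,1}$, $T=T_{m,1}$, $\Omega=\Omega_{m,1}$; by Theorem~\ref{omega_formula} one may also assume $x$ odd, though this is inessential.

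Next, set $y = Q(x)\in\zz$; the goal becomes $y\in\qq$, and the hypotheses read $\Phi(y) = x\in\qq$ and $\Phi(-1-y) = \Omega(x)\in\qq$. If $y$ has only finitely many $1$'s or only finitely many $0$'s then $y\in\mathbb{Z}\subseteq\qq$, so assume both are infinite and let $d_0<d_1<\cdots$ be the positions of the $1$'s of $y$. Since $\Phi\sigma = T\Phi$ by equation~\ref{phi}, the orbit $x_n := T^n(x)$ equals $\Phi(\sigma^n y)$ and the orbit $w_n := T^n(\Omega(x)) = \Omega(x_n)$ equals $\Phi(\sigma^n(-1-y))$; applying~\ref{phi_expansion} to $\sigma^n y$, whose $1$'s sit at $d_{a_n}-n<d_{a_n+1}-n<\cdots$ with $a_n := \#\{j : d_j < n\}$, gives
\[
x_n \;=\; -\sum_{i\ge0} m^{-(i+1)}\,2^{\,d_{a_n+i}-n},
\]
and similarly for $w_n$ with the complementary digit sequence. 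Because $x$ and $\Omega(x)$ lie in $\qq$, both orbits consist of rationals with denominators dividing a fixed odd integer; hence $y\in\qq$ is equivalent to $\{x_n\}$ being finite, equivalently bounded, and also to $\{w_n\}$ being finite — and the two orbits are finite or infinite together, since $w_n=\Omega(x_n)$ and $\Omega$ is a bijection.

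The arithmetic then sharpens as follows. Comparing the series for $x_n$ with that for $x=\Phi(y)$ shows $x_n = m^{a_n}v_n$ where $v_n = 2^{-n}\big(x+\sum_{j<a_n}m^{-(j+1)}2^{d_j}\big) = x_n/m^{a_n}\in\qq$; and since every $d_j$ occurring here is $<n$, a crude triangle inequality gives $|v_n|_\infty \le |x|_\infty + \tfrac1{2(m-1)}$, independently of $n$, and symmetrically $w_n = m^{\,n-a_n}v_n'$ with $|v_n'|_\infty$ bounded. Thus if the orbit of $x$ is not eventually periodic, $\{x_n\}$ is unbounded, which forces $a_n\to\infty$ along a subsequence, and through $w_n$ also $n-a_n\to\infty$ along a subsequence. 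What remains — and this is the crux — is to convert the simultaneous validity of the relation $w_n=\Omega(x_n)$, the complementarity of the two parity sequences, and the bounded-denominator constraint on \emph{both} orbits, into a contradiction; the natural hope is to extract from $w_n=\Omega(x_n)$ a recursion or Diophantine inequality linking $a_n$, $n-a_n$ and the numerators of $x_n$ and $w_n$ that cannot persist once both grow.

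I expect this last step to be the real obstacle, and I do not see how to execute it at present. The difficulty is that rationality of $x$ — equivalently, rationality of the $2$-adic sum $\sum_j m^{-(j+1)}2^{d_j}$ — is a subtle, non-local condition on the gap sequence $(d_{j+1}-d_j)$, and deciding it for even a single such sum is in effect the periodicity conjecture for $T_{m,r}$; imposing the analogous condition on the complementary sequence as well need not make matters easier. Indeed Conjecture~\ref{rational_pairs} follows at once from the periodicity conjecture (which yields $Q_{m,r}(\qq)\subseteq\qq$ outright), so any unconditional proof must genuinely play the two orbits $(x_n)$ and $(w_n)$ off against each other rather than handle either alone — and it is far from clear that controlling a pair of orbits is any easier than controlling one.
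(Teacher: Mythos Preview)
There is no proof in the paper to compare against: the statement is labelled a \emph{Conjecture} and is presented as open. The paper remarks only that it ``may be regarded as a weak version of the periodicity conjecture'' and records the symmetry $Q_{m,r}(\Omega_{m,r}(x)) = -1 - Q_{m,r}(x)$; it offers no argument toward a proof.

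Your proposal is likewise not a proof, as you yourself acknowledge. The preliminary reductions are sound --- the normalisation to $r=1$ via $\Phi_{m,r} = r\,\Phi_{m,1}$, the translation of $Q(x)\in\qq$ into eventual periodicity of the $T$-orbit, the bounded-denominator observation (so that finiteness of the orbit is equivalent to boundedness), and the decomposition $x_n = m^{a_n}v_n$ with $|v_n|_\infty$ bounded --- but none of this touches the real difficulty, which you correctly locate at the end. One must exploit the \emph{simultaneous} rationality of $x$ and $\Omega(x)$, and nothing in your setup does so beyond recording that both orbits have bounded denominator. Your closing assessment is accurate: the conjecture follows trivially from the generalised periodicity conjecture $Q_{m,r}(\qq)\subseteq\qq$, but no mechanism is known for playing the two complementary parity sequences off against one another, and the paper does not supply one. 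The gap you identify is genuine, and it is precisely why the statement appears as a conjecture rather than a theorem.
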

This may be regarded as a weak version of the periodicity conjecture. Note that the assertion is symmetrical in $x$ and $\Omega_{m,r}(x)$, because $\Omega_{m,r}(\Omega_{m,r}(x)) = x$ and $Q_{m,r}(\Omega_{m,r}(x)) = -1 - Q_{m,r}(x)$.

\section{The autoconjugacy as a real-valued function}

For $k \in \mathbb{N}$, define the function
\[
Q_{k,m,r} : \zz \to \mathbb{N} \subseteq \zz
\]
by the condition
\[
Q_{k,m,r}(x) \equiv Q_{m,r}(x) \pmod {2^k} \ee{.}
\]
In other words, $Q_{k,m,r}(x)$ is the sum of the first $k$ terms of $Q_{m,r}$ as defined in equation~\ref{Qdef}.

When restricted to $\mathbb{Z}$, $Q_{k,m,r}$ is periodic with period $2^k$, and the induced function $\overline{Q}_{k,m,r} : \mathbb{Z} / 2^k \mathbb{Z} \to \mathbb{Z} / 2^k \mathbb{Z}$ is a permutation, whose order divides $2^k$. The proof of these statements for $m=3, k=1$ is outlined by Lagarias\cite{laga}; the same proof works in the more general case.

Define
\[
\Omega_{k,m,r} = \Phi_{m,r} V Q_{k,m,r} \ee{.}
\]
Since $Q_{k,m,r}(x) \to Q_{m,r}(x)$ in the 2-adic metric as $k \to \infty$, by continuity $\Omega_{k,m,r}(x) \to \Omega_{m,r}(x)$ in the 2-adic metric as $k \to \infty$.

Since $Q_{k,m,r}(x) \in \mathbb{N} \subseteq \qq$, we have $\Omega_{k,m,r}(x) \in \qq \subseteq \mathbb{R}$ for all $x$. Therefore we can ask whether $\Omega_{k,m,r}(x)$ tends to some limit in the \emph{real} metric as $k \to \infty$.

Let
\[
\hat{\Omega}_{m,r}(x) = \lim_{k \to \infty} \Omega_{k,m,r}(x) \in \mathbb{R}
\]
when this limit exists.

The conditions for $\hat{\Omega}_{m,r}(x)$ to exist depend upon $m$ and on the limiting density of even iterates of $T_{m,r}$ starting at $x$.

Let
\[
\nu_{m,r}(x) = \liminf_{k \to \infty} \frac1k \Big\lvert \big\{i < k : T_{m,r}^{(i)}(x) \equiv 0 \pmod 2 \big\} \Big\rvert \ee{.}
\]

\begin{thm}\label{omegahat}
If $\nu_{m,r}(x) > \frac{\log 2}{\log m}$, then $\hat{\Omega}_{m,r}(x)$ exists. If in addition $Q_{m,r}(x) \in \qq$, then $\hat{\Omega}_{m,r}(x) = \Omega_{m,r}(x)$.
\end{thm}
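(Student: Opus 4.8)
The plan is to derive a closed rational formula for $\Omega_{k,m,r}(x)$ and read both assertions off it. Write $t_i\in\{0,1\}$ for the bits of $Q_{m,r}(x)$, so $t_i\equiv T_{m,r}^{(i)}(x)\pmod 2$; let $e_0<e_1<\cdots$ enumerate the indices $i$ with $t_i=0$, and set $j(k)=\lvert\{i<k:t_i=0\}\rvert$. Applying $V$ to the truncation $Q_{k,m,r}(x)=\sum_{i<k}t_i2^i$ complements the first $k$ bits and switches on every bit from position $k$ onward, so the one-bits of $VQ_{k,m,r}(x)$ lie at positions $e_0,\dots,e_{j(k)-1},k,k+1,k+2,\dots$. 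Substituting this into the generalized Bernstein formula~\ref{phi_expansion} and summing the geometric tail $\sum_{l\ge 0}(2/m)^l=m/(m-2)$ (legitimate in $\zz$ since $m-2$ is a $2$-adic unit) yields
\[
\Omega_{k,m,r}(x)=-r\sum_{i=0}^{j(k)-1}m^{-(i+1)}2^{e_i}-\frac{r\,2^k}{(m-2)\,m^{j(k)}}\in\qq .
\]
(The hypothesis can hold only for $m\ge 3$, so $2/m<1$ and the relevant geometric series converge in $\mathbb{R}$ as well as in $\zz$.)

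For the first assertion I would estimate this sequence in the real metric. From the formula, as $k$ increases by one the value is unchanged when $T_{m,r}^{(k)}(x)$ is even and changes by $-r\,2^k/((m-2)m^{j(k)})$ when it is odd. Fix $\varepsilon>0$ with $\nu_{m,r}(x)-\varepsilon>\log 2/\log m$; by the definition of $\liminf$ there is $k_0$ with $j(k)\ge(\nu_{m,r}(x)-\varepsilon)k$ for $k\ge k_0$, so $2^k/m^{j(k)}\le\rho^k$ with $\rho=2\,m^{-(\nu_{m,r}(x)-\varepsilon)}<1$. Hence the successive real differences of $\Omega_{k,m,r}(x)$ are dominated by a convergent geometric series, $(\Omega_{k,m,r}(x))_k$ is a real Cauchy sequence, and $\hat\Omega_{m,r}(x)$ exists. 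The same bound, via $i+1=j(e_i+1)\ge(\nu_{m,r}(x)-\varepsilon)(e_i+1)$, shows $\sum_i m^{-(i+1)}2^{e_i}$ converges in $\mathbb{R}$; letting $k\to\infty$ in the displayed formula (note $\nu_{m,r}(x)>0$ forces infinitely many even iterates, whence $j(k)\to\infty$) then identifies $\hat\Omega_{m,r}(x)=-r\sum_{i\ge 0}m^{-(i+1)}2^{e_i}$ as a real number.

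For the second assertion, assume in addition $Q_{m,r}(x)\in\qq$, so $(t_i)$ is eventually periodic and there are $i_0$, $p\ge 1$, $L\ge 1$ with $e_{i+p}=e_i+L$ for all $i\ge i_0$; counting zero-bits over one period gives $\nu_{m,r}(x)=p/L$. On the $2$-adic side, $VQ_{m,r}(x)$ has one-bits exactly at $e_0<e_1<\cdots$, so Bernstein's formula~\ref{phi_expansion} gives $\Omega_{m,r}(x)=-r\sum_{i\ge 0}m^{-(i+1)}2^{e_i}$ in $\zz$. In both the real and the $2$-adic series I would split off the finitely many terms with $i<i_0$ and regroup the rest as $\sum_{s=0}^{p-1}m^{-(i_0+s+1)}2^{e_{i_0+s}}\sum_{t\ge 0}(2^L/m^p)^t$. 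The crucial point is that the hypothesis $\nu_{m,r}(x)=p/L>\log 2/\log m$ is precisely the inequality $2^L<m^p$, so the series $\sum_{t\ge 0}(2^L/m^p)^t$ converges in $\mathbb{R}$ as well as in $\zz$, and in either setting its sum is the one rational number $m^p/(m^p-2^L)$ --- the unique solution of $(1-2^L/m^p)S=1$. Hence the tail, and therefore the whole sum, evaluates to the same rational number under both metrics, so $\hat\Omega_{m,r}(x)=\Omega_{m,r}(x)$.

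The main obstacle is this last coincidence: the bare facts that $\Omega_{k,m,r}(x)$ converges $2$-adically to $\Omega_{m,r}(x)$ and converges in $\mathbb{R}$ to $\hat\Omega_{m,r}(x)$ do not by themselves force the two limits to agree --- the denominators $m^{j(k)}$ grow without bound --- so one genuinely needs the eventual periodicity to collapse the tail onto a fixed rational, together with the density hypothesis in the sharp form $2^L<m^p$ to make the real and $2$-adic evaluations of that tail literally the same geometric sum. Everything in the first part is routine once the closed formula is in hand.
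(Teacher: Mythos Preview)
Your proof is correct and follows essentially the same route as the paper's: derive the closed formula for $\Omega_{k,m,r}(x)$ via Bernstein's expansion applied to $VQ_{k,m,r}(x)$, use the density hypothesis to control $2^k/m^{j(k)}$ and $2^{e_i}/m^{i+1}$, and in the rational case exploit eventual periodicity of the bit sequence to evaluate the tail as the same geometric sum in both metrics. Your treatment is in fact more careful than the paper's in a couple of places --- you avoid a minor indexing ambiguity by working with $j(k)$ directly, and you make explicit why the real and $2$-adic tails collapse to the \emph{same} rational number (namely that $\nu_{m,r}(x)>\log 2/\log m$ becomes $2^L<m^p$), which the paper only asserts.
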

\begin{proof}
Let $0 \leq i_0 < i_1 < i_2 < \ldots$ be the set of $i$ such that $T_{m,r}^{(i)}(x) \equiv 0 \pmod 2$, arranged in increasing order. If $i_j \leq k < i_{j+1}$, then
\begin{eqnarray*}
\Omega_{k,m,r}(x) &=& \Phi_{m,r}(-1 - Q_{k,m,r}(x)) \\
&=& \Phi_{m,r}\Big(\sum_{l<j} 2^{i_l} + \sum_{i \geq k} 2^i\Big) \\
&=& -\frac{r}{m} \Big(\sum_{l<j} m^{-l} 2^{i_l} + \frac{2^k}{m^j} \big(1 - \frac{2}{m}\big)^{-1}\Big)
\end{eqnarray*}
by equation~\ref{phi_expansion}. The condition on the density of the $i_j$ implies that the second term tends to zero. Furthermore, it implies that
\[
\liminf_{j \to \infty} \frac{j}{i_j} > \frac{\log 2}{\log m} \ee{.}
\]
Thus, for some $\delta > 0$ and sufficiently large $j$,
\begin{eqnarray*}
2^{i_j} / m^j &<& 2^{j / (\frac{\log 2}{\log m} + \delta)} / m^j \\
&=& m^{-\epsilon j}
\end{eqnarray*}
where $\epsilon = 1 - (1 + \frac{\log m}{\log 2} \delta)^{-1} > 0$. Thus the first term converges.

If $Q_{m,r}(x) \in \qq$, then the sequence of $i_j$ eventually settles into a periodic pattern (in other words, for some $M$ and $m$, and sufficiently large $j$, $i_{j+m} = i_j + M$); the infinite sum can then be evaluated exactly, and gives the same (rational) result in $\zz$ or $\mathbb{R}$.
\end{proof}

Thus if Conjecture~\ref{rational_pairs} is true, then if we know that $\{x, \Omega_{m,r}(x) \} \subseteq \qq$ and $\nu_{m,r}(x) > \frac{\log 2}{\log m}$, we can conclude that $\hat{\Omega}_{m,r}(x) = \Omega_{m,r}(x)$. The following conjecture is slightly stronger.

\begin{conj}
If $\{x, \Omega_{m,r}(x) \} \subseteq \qq$ and $\hat{\Omega}_{m,r}(x)$ exists then $\hat{\Omega}_{m,r}(x) = \Omega_{m,r}(x)$.
\end{conj}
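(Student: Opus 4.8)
The plan is to prove the conjecture conditionally on Conjecture~\ref{rational_pairs}; the one real obstacle is Conjecture~\ref{rational_pairs} itself, and everything past it is a short dichotomy. Granting it, from $\{x,\Omega_{m,r}(x)\}\subseteq\qq$ we get $Q_{m,r}(x)\in\qq$, and I would reduce at once to $m\geq 3$ (the case $m=\pm1$ being trivial, and $m\leq-3$ going through identically with $|m|$ in place of $m$). With $0\leq i_0<i_1<i_2<\cdots$ enumerating the $i$ such that $T_{m,r}^{(i)}(x)\equiv 0\pmod 2$, as in the proof of Theorem~\ref{omegahat}, the key observation is that $Q_{m,r}(x)\in\qq$ makes the sequence $(i_j)$ eventually periodic: $i_{j+p}=i_j+L$ for large $j$, some $p\geq 0$, $L\geq 1$, with $\nu_{m,r}(x)=p/L$ and --- crucially --- $2^L\neq m^p$, an even number never equalling an odd one that is at least $3$.

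Next I would record, directly from the computation in the proof of Theorem~\ref{omegahat} via equation~\ref{phi_expansion}, that with $j_k:=\lvert\{l:i_l<k\}\rvert$ one has $\Omega_{k,m,r}(x)=-r\sum_{l<j_k}m^{-(l+1)}2^{i_l}-\frac{r}{m-2}\cdot\frac{2^k}{m^{j_k}}$; and a short further calculation with equation~\ref{phi_expansion} shows that $\Omega_{k+1,m,r}(x)=\Omega_{k,m,r}(x)$ when $T_{m,r}^{(k)}(x)$ is even (indeed $Q_{k+1,m,r}(x)=Q_{k,m,r}(x)$ in that case), while $\Omega_{k+1,m,r}(x)-\Omega_{k,m,r}(x)=-\frac{r}{m-2}\cdot\frac{2^k}{m^{j_k}}$ when $T_{m,r}^{(k)}(x)$ is odd. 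So whether $\hat{\Omega}_{m,r}(x)$ exists in $\mathbb{R}$ is decided entirely by the size of $2^k/m^{j_k}$ along the odd positions $k$.

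The dichotomy is then on the sign of $\nu_{m,r}(x)-\frac{\log 2}{\log m}$ (equality being excluded by $2^L\neq m^p$). If $\nu_{m,r}(x)>\frac{\log 2}{\log m}$, then Theorem~\ref{omegahat} applies with both of its clauses and gives $\hat{\Omega}_{m,r}(x)=\Omega_{m,r}(x)$ outright. If $\nu_{m,r}(x)<\frac{\log 2}{\log m}$, then $p<L$, so odd positions $k$ occur infinitely often, and $j_k/k\to p/L=\nu_{m,r}(x)$ yields $\frac1k\log(2^k/m^{j_k})\to\log 2-\nu_{m,r}(x)\log m>0$, so $2^k/m^{j_k}\to\infty$; hence the nonzero increments of $(\Omega_{k,m,r}(x))_k$ blow up, the sequence is not Cauchy in $\mathbb{R}$, and $\hat{\Omega}_{m,r}(x)$ does not exist --- so the hypothesis of the conjecture is vacuous. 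Either way the conclusion holds. (In particular this would show the conjecture is a consequence of Conjecture~\ref{rational_pairs} together with Theorem~\ref{omegahat}.)

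The hard part is therefore Conjecture~\ref{rational_pairs}. Without it one knows only that $x$ and $\Omega_{m,r}(x)$ have eventually periodic $2$-adic expansions, not that $Q_{m,r}(x)$ does, so $(i_j)$ need not be eventually periodic, the tail of $-r\sum_l m^{-(l+1)}2^{i_l}$ has no closed form, and there is no evident handle relating the real number $\hat{\Omega}_{m,r}(x)$ to the $2$-adic number $\Omega_{m,r}(x)$. Some structural input of this kind is genuinely needed: a sequence of rationals with odd denominator can converge $2$-adically to one rational and in $\mathbb{R}$ to a different one, so the bare existence of both limits cannot by itself force the two to agree.
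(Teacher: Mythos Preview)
The paper does not prove this statement: it is stated as an open conjecture, with only the remark that it is ``slightly stronger'' than the implication one gets by combining Conjecture~\ref{rational_pairs} with Theorem~\ref{omegahat} (namely, that $\{x,\Omega_{m,r}(x)\}\subseteq\qq$ together with $\nu_{m,r}(x)>\log 2/\log m$ forces $\hat\Omega_{m,r}(x)=\Omega_{m,r}(x)$). So there is no paper proof to compare against, and your proposal is not an unconditional proof either --- as you yourself make clear, everything rests on Conjecture~\ref{rational_pairs}, which remains open.

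That said, your conditional reduction is correct and actually sharpens the paper's discussion. The paper only observes that Conjecture~\ref{rational_pairs} plus the density hypothesis yields the conclusion; you show that Conjecture~\ref{rational_pairs} alone implies the full Conjecture, by proving that once $Q_{m,r}(x)\in\qq$ the mere \emph{existence} of $\hat\Omega_{m,r}(x)$ already forces the density hypothesis. Your dichotomy is sound: eventual periodicity of the bit sequence gives $\nu_{m,r}(x)=p/L$ with $2^L\neq m^p$ (since $L\geq 1$ and $m$ is odd with $|m|\geq 3$), so equality is excluded; if $p/L>\log 2/\log|m|$ Theorem~\ref{omegahat} finishes; if $p/L<\log 2/\log|m|$ then $p<L$, odd positions recur, and your increment formula $\Omega_{k+1,m,r}(x)-\Omega_{k,m,r}(x)=-\tfrac{r}{m-2}\cdot 2^k/m^{j_k}$ at those positions has modulus tending to $\infty$, so the sequence is not Cauchy in $\mathbb{R}$ and $\hat\Omega_{m,r}(x)$ cannot exist. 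The closed form you quote for $\Omega_{k,m,r}(x)$ is exactly the computation in the proof of Theorem~\ref{omegahat}, and your increment identities follow (in the even case trivially from $Q_{k+1,m,r}(x)=Q_{k,m,r}(x)$). The edge cases $m=\pm1$ you wave through are indeed harmless: there the tail term already has magnitude growing like $2^k$, so $\hat\Omega$ exists only when $Q_{m,r}(x)$ is a nonnegative integer, in which case $\Omega_{k,m,r}(x)$ is eventually constant and the conclusion is immediate.

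So the genuine gap is exactly the one you name: without Conjecture~\ref{rational_pairs} there is no reason for $(i_j)$ to be eventually periodic, and then neither branch of the dichotomy is available. Your final paragraph is the right diagnosis --- a sequence in $\qq$ can have distinct $2$-adic and real limits, so some structural input beyond the bare hypotheses is required, and Conjecture~\ref{rational_pairs} is precisely that input. What you have established is the implication \emph{Conjecture~\ref{rational_pairs} $\Rightarrow$ Conjecture~2}, which the paper does not state; an unconditional proof remains out of reach.
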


Note that there are $\{x, \Omega_{5,1}(x) \} \subseteq \qq$ such that $\hat{\Omega}_{5,1}(x)$ doesn't exist: for example, $x = -\frac{14}{17} = \Omega_{5,1}(\frac13) = \Phi_{m,r}(-\frac67)$.

Finally, we conjecture (on the basis of limited numerical evidence) that for integer $x$ and $m \geq 5$, there is a sufficient density of even iterates of $T_{m,r}^{(k)}(x)$ that $\hat{\Omega}_{m,r}(x)$ exists.

\begin{conj}
If $m \geq 5$ and $x \in \mathbb{Z}$, then $\hat{\Omega}_{m,r}(x)$ exists.
\end{conj}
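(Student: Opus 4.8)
The plan is to reduce the statement, via Theorem~\ref{omegahat}, to a density estimate for the parity sequence of an integer orbit, and then to see what stands in the way. By Theorem~\ref{omegahat}, $\hat{\Omega}_{m,r}(x)$ exists whenever $\nu_{m,r}(x) > \log 2/\log m$; and in fact, when the orbit of $x$ is eventually periodic, a direct computation shows $\hat{\Omega}_{m,r}(x)$ exists precisely when the eventual cycle has even-iterate density exceeding $\log 2/\log m$. So the task is to prove that for every integer $x$ (with $m \ge 5$) the parity sequence $t_k \equiv T_{m,r}^{(k)}(x) \pmod 2$ has asymptotic density of zeros greater than $\log 2/\log m$. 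Here the hypothesis $m \ge 5$ leaves room to spare, since $\log 2/\log m \le \log 2/\log 5 < \tfrac12$: the natural (conjecturally correct, superficially stronger) target is simply $\nu_{m,r}(x) = \tfrac12$, and any uniform bound $\nu_{m,r}(x) \ge c$ with $c \in (\log 2/\log 5, \tfrac12]$ would do, with the value $m = 5$ requiring the inequality to be strict.

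An integer orbit is either eventually periodic or unbounded, and I would treat these separately. If the orbit of $x$ is eventually periodic, then $Q_{m,r}(x) \in \qq$ and the question reduces to one about the integer cycles of $T_{m,r}$: for a cycle of period $p$ with $b$ odd steps and $1$-bit positions $d_0 < \ldots < d_{b-1}$ in the periodic parity block, equation~\ref{phi_expansion} (summed over the period) expresses a cycle point as $-rA/(m^b - 2^p)$ with $A = \sum_i m^{b-1-i} 2^{d_i} \in \mathbb{Z}$, and one must rule out any such point lying in $\mathbb{Z}$ with $b/p$ too large — i.e.\ with the cycle more than roughly $1 - \log 2/\log m$ odd. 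This is a version of the cycle-finiteness problem for $mx+r$: for small periods it is a finite check, but in general it is open. For the unbounded orbits — which for $m \ge 5$ should be the generic case — the mechanism one hopes for is the standard probabilistic heuristic: $T_{m,r}(y) \pmod 2$ is determined by $y \pmod 4$ and is balanced there (of the two odd classes mod $4$, one maps $y$ to an odd value and one to an even value, and likewise for the two even classes), so $(t_k)$ should be equidistributed, giving $\nu_{m,r}(x) = \tfrac12$.

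The obstacle is that both halves are, in essence, hard open problems about $T_{m,r}$ restricted to $\mathbb{Z}$ — the first a statement about its integer cycles, the second a form of the divergent-trajectories / density conjecture — and the only unconditional a-priori fact, $|T_{m,r}^{(k)}(x)| \ge 1$, points the wrong way: it yields a \emph{lower} bound on the density of odd iterates (enough odd steps are needed to keep the orbit from collapsing below $1$), hence an \emph{upper} bound on $\nu_{m,r}(x)$, not the lower bound we want. A genuine proof would need a new handle on how an integer orbit distributes over the $2$-adic residue classes, the same bottleneck as in those conjectures. One should also read the statement as tacitly excluding a degenerate family: if $(m-2) \mid r$ then $\xi = -r/(m-2)$ is an odd integer fixed by $T_{m,r}$, so its orbit is entirely odd, $\nu_{m,r}(\xi) = 0$, and $\Omega_{k,m,r}(\xi) = -r\,2^k/(m-2) \to \pm\infty$ in $\mathbb{R}$; hence $\hat{\Omega}_{m,r}(\xi)$ does not exist — though this exceptional set is empty when, for instance, $r = 1$.
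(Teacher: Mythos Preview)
The statement is a \emph{conjecture} in the paper, not a theorem: the paper offers no proof, only the remark that it would follow from Theorem~\ref{omegahat} if one could control the growth of $T_{m,r}^{(k)}(x)$ for integer $x$, and that in this sense it is a very weak form of the divergent-trajectories conjecture. Your proposal arrives at the same reduction and the same verdict --- that both the eventually-periodic and the unbounded cases amount to open problems about $T_{m,r}$ on $\mathbb{Z}$ --- and you explicitly decline to claim a proof. So there is nothing to adjudicate: your analysis and the paper's post-conjecture discussion halt at the same obstacle, and your further remark that the trivial bound $|T_{m,r}^{(k)}(x)| \ge 1$ points the wrong way (it constrains $\nu_{m,r}$ from above, not below) is a useful sharpening of why the problem resists elementary attack.

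Your final paragraph, however, goes beyond the paper and deserves emphasis. The observation that when $(m-2) \mid r$ the odd integer $\xi = -r/(m-2)$ is a fixed point of $T_{m,r}$, so that $\nu_{m,r}(\xi) = 0$ and $\Omega_{k,m,r}(\xi) = \Phi_{m,r}(-2^k) = -r\,2^k/(m-2) \to \pm\infty$ in $\mathbb{R}$, gives a genuine counterexample to the conjecture as stated (for instance $m=5$, $r=3$, $x=-1$). The paper does not note this exception; it is a real correction, and the conjecture should be read either with $r=1$ (where, as you point out, the exceptional set is empty) or with this fixed-point orbit explicitly excluded.
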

This would follow from Theorem~\ref{omegahat} if we could show that the sequence of iterates $T_{m,r}^{(k)}(x)$ (for integer $x$) does not grow too fast. In this sense, it is a (very) weak version of the divergent trajectories conjecture. It would be interesting to try and better understand how $\nu_{m,r}(x)$ is constrained. Can we even show that $\nu_{m,r}(x) > 0$?

Table~\ref{omega} shows a few values of $\Omega_{5,1}(x)$ and $\hat{\Omega}_{5,1}(x)$ for integer $x$.
\begin{table}
\centering
\begin{tabular}{|c|c|c|}
$x$ & $\Omega_{5,1}(x) \in \zz$ & $\hat{\Omega}_{5,1}(x) \in \mathbb{R}$ \\
\hline
$-9$ & $2^1 (1 + 2^1 + 2^4 + \ldots)$ & $-1.129 \ldots \times 10^4$ \\
$-7$ & $-{160532}/{78125}$ & $-{160532}/{78125}$ \\
$-5$ & $-{3662262}/{1953125}$ & $-{3662262}/{1953125}$ \\
$-3$ & $-{321064}/{78125}$ & $-{321064}/{78125}$ \\
$-1$ & $-2$ & $-2$ \\
$0$ & $-1/3$ & $-1/3$ \\
$1$ & $-{52}/{31}$ & $-{52}/{31}$ \\
$3$ & $-{26}/{31}$ & $-{26}/{31}$ \\
$5$ & $-{464}/{31}$ & $-{464}/{31}$ \\
$7$ & $2^1 (1 + 2^1 + 2^3 + \ldots)$ & $-1.426 \ldots \times 10^2$ \\
$9$ & $2^2 (1 + 2^1 + 2^5 + \ldots)$ & $-1.777 \ldots \times 10^2$
\end{tabular}
\caption{Some values of $\Omega_{5,1}$ and $\hat{\Omega}_{5,1}$}\label{omega}
\end{table}

\end{document}